\newcommand{\M}{\mathcal{M}}
\newcommand{\BP}{\ensuremath{\textbf{P}}}
\newcommand{\lr}[1]{\langle #1 \rangle}
\newcommand{\lra}{\leftrightarrow}
\renewcommand{\phi}{\varphi}
\newtheorem{theorem}{Theorem}
\newtheorem{lemma}[theorem]{Lemma}
\newtheorem{definition}[theorem]{Definition}
\newtheorem{proposition}[theorem]{Proposition}
\newcommand{\weg}[1]{}
\title{A sequence of neighborhood contingency logics}
\author{Jie Fan\\
\small School of Philosophy, Beijing Normal University  \\
\small \texttt{fanjie@bnu.edu.cn}}
\date{Submitted to some journal on Nov. 8, 2017}
\begin{document}
\maketitle

\begin{abstract}
This note proposes various axiomatizations of contingency logic under neighborhood semantics. In particular, by defining a suitable canonical neighborhood function, we give sound and complete axiomatizations of monotone contingency logic and regular contingency logic, thereby answering two open questions raised by Bakhtiari, van Ditmarsch, and Hansen. The canonical function is inspired by a function proposed by Kuhn in~1995. We show that Kuhn's function is actually equal to a related function originally given by Humberstone.
\end{abstract}

\noindent Keywords: contingency logic, neighborhood semantics, axiomatization, monotone logic, regular logic

\section{Introduction}

Compared to normal modal logics, non-normal modal logics usually have many disadvantages, such as weak expressivity, weak frame definability, which leads to non-triviality of axiomatizations. Contingency logic is such a logic~\cite{MR66,Cresswell88,Humberstone95,DBLP:journals/ndjfl/Kuhn95,DBLP:journals/ndjfl/Zolin99,Fanetal:2015}. Since it was independently proposed by Scott and Montague in 1970~\cite{Scott:1970,Montague:1970}, neighborhood semantics has been a standard semantical tool for handling non-normal modal logics~\cite{Chellas:1980}.

A neighborhood semantics of contingency logic is proposed in~\cite{FanvD:neighborhood}. According to the interpretation, a formula $\phi$ is noncontingent, if and only if the proposition expressed by $\phi$ is a neighborhood of the evaluated state, or the complement of the proposition expressed by $\phi$ is a neighborhood of the evaluated state. This interpretation is in line with the philosophical intuition of noncontingency, viz. necessarily true or necessarily false. It is shown that contingency logic is less expressive than standard modal logic over various neighborhood model classes, and many neighborhood frame properties are undefinable in contingency logic. This brings about the difficulties in axiomatizing this logic over various neighborhood frames.

To our knowledge, only the classical contingency logic, i.e. the minimal system of contingency logic under neighborhood semantics, is presented in the literature~\cite{FanvD:neighborhood}. It is left as two open questions in~\cite{Bakhtiarietal:2017} what the axiomatizations of monotone contingency
logic and regular contingency logic are. In this paper, we will answer these two questions.

Besides, we also propose other proof systems up to the minimal contingency logic, and show their completeness with respect to the corresponding neighborhood frames. This will give a complete diagram which includes 8 systems, as~\cite[Fig.~8.1]{Chellas:1980} did for standard modal logic.


The remainder of this note is structured as follows. Section~\ref{sec.preliminaries} introduces some basics of contingency logic, such as its language, neighborhood semantics, axioms and rules. Sections~\ref{sec.sys-no-m} and~\ref{sec.sys-m} deal with the completeness of proof systems mentioned in Sec.~\ref{sec.preliminaries}, with or without a special axiom. The completeness proofs rely on the use of canonical neighborhood functions. In Sec.~\ref{sec.sys-no-m}, a simple canonical function is needed, while in Sec.~\ref{sec.sys-m} we need a more complex canonical function, which is inspired by a crucial function $\lambda$ used in a Kripke completeness proof in the literature. We further reflect on this $\lambda$ in Section~\ref{sec.reflection-lambda}, and show it is in fact equal to a related but complicated function originally given by Humberstone. We conclude with some discussions in Section~\ref{sec.concl}.

\section{Preliminaries}\label{sec.preliminaries}

Throughout this note, we fix $\BP$ to be a denumerable set of propositional variables. The language $\mathcal{L}(\Delta)$ of contingency logic is defined recursively as follows:
$$\phi::=p\in\BP\mid \neg\phi\mid \phi\land\phi\mid \Delta\phi.$$
$\Delta\phi$ is read ``it is non-contingent that $\phi$''. The contingency operator $\nabla$ abbreviates $\neg\Delta$. It does not matter which one of $\Delta$ and $\nabla$ is taken as primitive.

The neighborhood semantics of $\mathcal{L}(\Delta)$ is interpreted on neighborhood models. To say a triple $\M=\lr{S,N,V}$ is a neighborhood model, if $S$ is a nonempty set of states, $N:S\to\mathcal{P}(\mathcal{P}(S))$ is a neighborhood function, and $V$ is a valuation.

The following list of neighborhood properties is taken from~\cite[Def.~3]{FanvD:neighborhood}.
\begin{definition}[Neighborhood properties]\label{def.properties}\

$(n)$: $N(s)$ \emph{contains the unit}, if $S\in N(s)$. 

$(i)$: $N(s)$ \emph{is closed under intersections}, if $X,Y\in N(s)$ implies $X\cap Y\in N(s)$. 

$(s)$: $N(s)$ is \emph{supplemented}, or \emph{closed under supersets}, if $X\in N(s)$ and $X\subseteq Y\subseteq S$ implies $Y\in N(s)$. 

$(c)$: $N(s)$ is\emph{ closed under complements}, if $X\in N(s)$ implies $S\backslash X\in N(s)$. 
\end{definition}
Frame $\mathcal{F}=\lr{S,N}$ (and the corresponding model) possesses such a property P, if $N(s)$ has the property P for each $s\in S$, and we call the frame (resp. the model) P-frame (resp. P-model). Especially, a frame is called {\em quasi-filter}, if it possesses $(i)$ and $(s)$; a frame is called {\em filter}, if it has also $(n)$.

\medskip

Given a neighborhood model $\M=\lr{S,N,V}$ and a state $s\in S$, the semantics of $\phi\in\mathcal{L}(\Delta)$ is defined as follows~\cite{FanvD:neighborhood}, where $\phi^\M=\{s\in S\mid \M,s\vDash\phi\}$ is the truth set of $\phi$ (i.e. the proposition expressed by $\phi$) in $\M$.
\[
\begin{array}{|lll|}
\hline
\M,s\vDash p&\iff & s\in V(p)\\
\M,s\vDash \neg\phi&\iff &\M,s\nvDash \phi\\
\M,s\vDash\phi\land\psi&\iff &\M,s\vDash\phi\text{ and }\M,s\vDash\psi\\
\M,s\vDash\Delta\phi&\iff &\phi^\M\in N(s)\text{ or }S\backslash\phi^\M\in N(s)\\
\hline
\end{array}
\]

Our discussions will be based on the following axioms and rules.
\[
\begin{array}{ll}
\text{TAUT}& \text{all instances of tautologies}\\
\Delta\text{Equ}&\Delta\phi\lra\Delta\neg\phi\\
\Delta\text{M}&\Delta\phi\to\Delta(\phi\vee\psi)\vee\Delta(\neg\phi\vee\chi)\\
\Delta\text{C}&\Delta\phi\land\Delta\psi\to\Delta(\phi\land\psi)\\
\Delta\text{N}&\Delta\top\\
\text{RE}\Delta&\dfrac{\phi\lra\psi}{\Delta\phi\lra\Delta\psi}\\
\end{array}
\]

\weg{We will define a sequence of systems as follows:
\[
\begin{array}{lll}
{\bf E^\Delta}=\text{TAUT}+\text{Equ}+\text{RE}&&{\bf M^\Delta}={\bf E^\Delta}+\Delta\text{M}\\
{\bf (EC)^\Delta}={\bf E^\Delta}+\Delta\text{C}&&{\bf (EN)^\Delta}={\bf E^\Delta}+\Delta\text{N}\\
{\bf R^\Delta}={\bf M^\Delta}+\Delta\text{C}&&{\bf (EMN)^\Delta}={\bf M^\Delta}+\Delta\text{N}\\
{\bf (ECN)^\Delta}={\bf (EC)^\Delta}+\Delta\text{N}&&{\bf K^\Delta}={\bf R^\Delta}+\Delta\text{N}\\
\end{array}
\]}

We will show that the following systems are sound and strongly complete with respect to the class of their corresponding frame classes.
\[
\begin{array}{|l|c|}
\hline
\text{systems} & \text{frame classes} \\
\hline
{\bf E^\Delta}=\text{TAUT}+\Delta\text{Equ}+\text{RE}\Delta&\text{all}\\
{\bf M^\Delta}={\bf E^\Delta}+\Delta\text{M}&(s)\\
{\bf (EC)^\Delta}={\bf E^\Delta}+\Delta\text{C}&(i)\&(c)\\
{\bf (EN)^\Delta}={\bf E^\Delta}+\Delta\text{N}&(n)\\
{\bf R^\Delta}={\bf M^\Delta}+\Delta\text{C}&\text{quasi-filters}\\
{\bf (EMN)^\Delta}={\bf M^\Delta}+\Delta\text{N}&(s)\& (n)\\
{\bf (ECN)^\Delta}={\bf (EC)^\Delta}+\Delta\text{N}&(i)\&(c)\&(n)\\
{\bf K^\Delta}={\bf R^\Delta}+\Delta\text{N}&\text{filters}\\
\hline
\end{array}
\]

\weg{\begin{proposition}
$\Box\phi\lra\bigwedge_{\psi\in\mathcal{L}(\Delta)}\Delta(\phi\vee\psi)$ is valid on $(s)$-frames.
\end{proposition}

\begin{proof}
Let $\M=\lr{S,N,V}$ be a $(s)$-model. Suppose $\M,s\vDash\Box\phi$, then $\phi^\M\in N(s)$. By $(s)$, for all $\psi\in\mathcal{L}(\Delta)$, we have $\phi^\M\cup\psi^\M\in N(s)$, i.e. $(\phi\vee\psi)^\M\in N(s)$, thus $\M,s\vDash\Delta(\phi\vee\psi)$.

Conversely, assume that $\M,s\vDash\Delta(\phi\vee\psi)$ for all $\psi\in\mathcal{L}(\Delta)$, then $(\phi\vee\psi)^\M\in N(s)$ or $(\neg (\phi\vee\psi))^\M\in N(s)$ for all $\psi\in\mathcal{L}(\Delta)$.
\end{proof}}

Given a system $\Lambda$ and a maximal consistent set $S^c$ for $\Lambda$, let $|\phi|_\Lambda$ be the proof set of $\phi$ in $\Lambda$; in symbol, $|\phi|_\Lambda=\{s\in S^c\mid \phi\in s\}$. It is easy to show that $|\neg\phi|_\Lambda=S^c\backslash |\phi|_\Lambda$. We always omit the subscript $\Lambda$ when it is clear from the context.

\section{Systems excluding $\Delta$M}\label{sec.sys-no-m}

Given a proof system, a standard method of showing its completeness under neighborhood semantics is constructing the canonical neighborhood model, where one essential part is the definition of canonical neighborhood function.


\begin{definition}\label{def.cm-no-m} Let $\Sigma$ be a system excluding $\Delta$M.
A tuple $\M^c=\lr{S^c,N^c,V^c}$ is a {\em canonical neighborhood model} for $\Sigma$, if
\begin{itemize}
\item $S^c=\{s\mid s\text{ is a maximal consistent set for }\Sigma\}$,
\item $N^c(s)=\{|\phi|\mid \Delta\phi\in s\}$,
\item $V^c(p)=|p|$.
\end{itemize}
\end{definition}

\begin{theorem}\cite[Thm.~1]{FanvD:neighborhood}\label{thm.comp-E}
${\bf E^\Delta}$ is sound and strongly complete with respect to the class of all neighborhood frames.
\end{theorem}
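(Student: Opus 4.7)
The plan is a standard Henkin-style completeness proof based on the canonical model of Definition~\ref{def.cm-no-m}. Soundness is routine: the rule RE$\Delta$ preserves validity because provably equivalent formulas have the same truth set in every neighborhood model, while $\Delta$Equ is valid because the semantic clause for $\Delta$ is manifestly symmetric in $\phi^\M$ and $S\setminus\phi^\M$, which are respectively the truth sets of $\phi$ and $\neg\phi$.

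For strong completeness, given an ${\bf E^\Delta}$-consistent set $\Gamma$, the first step is Lindenbaum's Lemma, extending $\Gamma$ to some maximal consistent set $s_0\in S^c$. The whole content then lies in the truth lemma $\M^c,s\vDash\phi\iff\phi\in s$, to be proved by induction on $\phi$. The atomic and Boolean cases are standard; the only case that uses the specific axioms and rules of ${\bf E^\Delta}$ is $\phi=\Delta\psi$.

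For that case, the induction hypothesis gives $\psi^{\M^c}=|\psi|$ and therefore $S^c\setminus\psi^{\M^c}=|\neg\psi|$, so the target reduces to: $\Delta\psi\in s$ iff $|\psi|\in N^c(s)$ or $|\neg\psi|\in N^c(s)$. The right-to-left direction is immediate from the definition of $N^c$. For the converse, suppose $|\psi|=|\chi|$ for some $\chi$ with $\Delta\chi\in s$; then $\psi\lra\chi$ belongs to every maximal consistent set, so by the usual argument $\vdash\psi\lra\chi$, and RE$\Delta$ yields $\Delta\psi\in s$. The other possibility, $|\neg\psi|=|\chi|$ for some $\chi$ with $\Delta\chi\in s$, gives by the same reasoning $\vdash\neg\psi\lra\chi$ and hence $\Delta\neg\psi\in s$; a single application of $\Delta$Equ then converts this to $\Delta\psi\in s$.

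The only delicate point is the disjunctive shape of the semantic clause for $\Delta$, and this is precisely what $\Delta$Equ is designed to absorb: without it, membership of $|\neg\psi|$ in $N^c(s)$ would have no purchase on the definition of $N^c$, which only records witnesses of the form $|\chi|$ with $\Delta\chi\in s$. Since no frame property needs to be verified, the argument stops here.
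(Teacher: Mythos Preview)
The paper does not give its own proof of this theorem; it is simply quoted from~\cite{FanvD:neighborhood}. Your proposal supplies precisely the standard canonical-model argument one expects for classical contingency logic using the $N^c$ of Definition~\ref{def.cm-no-m}, and the substance is correct.

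There is one small slip in the labeling of the two directions in the $\Delta$-case. The direction that is genuinely \emph{immediate from the definition of $N^c$} is left-to-right: if $\Delta\psi\in s$ then trivially $|\psi|\in N^c(s)=\{|\phi|\mid\Delta\phi\in s\}$. What you argue under ``for the converse'' --- starting from $|\psi|=|\chi|$ (respectively $|\neg\psi|=|\chi|$) for some $\chi$ with $\Delta\chi\in s$, then using RE$\Delta$ (respectively RE$\Delta$ followed by $\Delta$Equ) to obtain $\Delta\psi\in s$ --- is in fact the right-to-left direction, and it is the one that actually requires work. So the content is right; only the direction labels are interchanged.
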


In what follows, we will extend the canonical model construction to all systems excluding $\Delta$M listed above.

It is not hard to show that $\Delta$C is invalid on the class of all $(i)$-frames, and thus ${\bf (EC)^\Delta}$ is {\em not} sound (and strongly complete) with respect to the class of all frames satisfying $(i)$. Despite this, it is indeed sound and strongly complete with respect to a more restricted frame class.

\begin{theorem}\label{thm.comp-EC}
${\bf (EC)^\Delta}$ is sound and strongly complete with respect to the class of all frames satisfying $(i)\&(c)$.
\end{theorem}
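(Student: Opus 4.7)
The plan is to establish soundness and completeness separately, with the completeness half following the pattern used for Theorem~\ref{thm.comp-E} but with the canonical neighborhood function from Definition~\ref{def.cm-no-m} now carrying extra structure.

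For soundness, I will verify that $\Delta$C is valid on any frame with $(i)\&(c)$. The key observation is that under $(c)$, $\M,s\vDash\Delta\phi$ is equivalent to $\phi^\M\in N(s)$, because the other disjunct $S\setminus\phi^\M\in N(s)$ collapses to it. Hence $\Delta\phi\land\Delta\psi$ at $s$ gives both $\phi^\M,\psi^\M\in N(s)$, and $(i)$ yields $(\phi\land\psi)^\M=\phi^\M\cap\psi^\M\in N(s)$, which is $\Delta(\phi\land\psi)$. The other axioms/rules of ${\bf E^\Delta}$ are already sound over the class of all frames by Theorem~\ref{thm.comp-E}.

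For strong completeness, I will build the canonical model $\M^c$ for $\Sigma={\bf (EC)^\Delta}$ as in Definition~\ref{def.cm-no-m} and check two things. First, $N^c(s)$ satisfies $(i)$ and $(c)$. For $(c)$: if $|\phi|\in N^c(s)$, then $\Delta\phi\in s$, so by $\Delta$Equ also $\Delta\neg\phi\in s$, hence $|\neg\phi|=S^c\setminus|\phi|\in N^c(s)$. For $(i)$: if $|\phi|,|\psi|\in N^c(s)$, then $\Delta\phi,\Delta\psi\in s$, and axiom $\Delta$C gives $\Delta(\phi\land\psi)\in s$, so $|\phi\land\psi|=|\phi|\cap|\psi|\in N^c(s)$. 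Second, I will prove by induction on $\phi$ that $\phi^{\M^c}=|\phi|$. The only non-routine case is $\Delta\phi$. The right-to-left direction is immediate from the definition of $N^c$. For left-to-right, if $\M^c,s\vDash\Delta\phi$, then $|\phi|\in N^c(s)$ or $S^c\setminus|\phi|=|\neg\phi|\in N^c(s)$, so there is some $\psi$ with $\Delta\psi\in s$ and either $|\psi|=|\phi|$ or $|\psi|=|\neg\phi|$. In either case, equality of proof sets gives provability of the corresponding biconditional (via Lindenbaum), so RE$\Delta$ together with $\Delta$Equ yields $\vdash\Delta\phi\lra\Delta\psi$, whence $\Delta\phi\in s$.

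The main subtlety to watch is the left-to-right direction of the truth lemma: the neighborhood $N^c(s)$ is indexed by formulas rather than by sets, so a witness set in $N^c(s)$ may arise from a syntactically different formula than $\phi$, and one must pass back through provable equivalence and then through both RE$\Delta$ \emph{and} $\Delta$Equ to close the case. Everything else is routine once $(i)$ and $(c)$ of $N^c$ are in place.
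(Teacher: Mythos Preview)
Your proposal is correct and follows essentially the same architecture as the paper: verify soundness of $\Delta$C on $(i)\&(c)$-frames, then check that the canonical neighborhood function of Definition~\ref{def.cm-no-m} satisfies $(i)$ (via $\Delta$C) and $(c)$ (via $\Delta$Equ), with the truth lemma inherited from the ${\bf E^\Delta}$ case (Theorem~\ref{thm.comp-E}).

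The one genuine point of contrast is in the soundness half. The paper does not verify validity of $\Delta$C directly; instead it appeals to an external result, namely that $\Delta$C is valid under a different (``new'') neighborhood semantics for $\Delta$, and that on $(c)$-frames the two semantics coincide. Your argument is more elementary and entirely self-contained: you observe that on a $(c)$-frame the two disjuncts in the semantic clause for $\Delta$ collapse, so that $\M,s\vDash\Delta\phi$ iff $\phi^\M\in N(s)$, after which $(i)$ immediately gives $\Delta$C. This direct route is arguably preferable here, since it avoids importing machinery from another paper. Your explicit rehearsal of the truth lemma (including the careful use of RE$\Delta$ and $\Delta$Equ in the left-to-right direction) simply unpacks what the paper leaves implicit by citing Theorem~\ref{thm.comp-E}.
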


\begin{proof}
By Thm.~\ref{thm.comp-E}, it suffices to show that $\Delta$C is valid on $(i)\&(c)$-frames, and that $N^c$ possesses $(i)$ and $(c)$. The former follows from the fact that $\Delta$C is valid on the class of $(i)\&(c)$-frames under a new semantics proposed in~\cite{Fan:2017} and that on $(c)$-frames, the current semantics and the new semantics satisfies the same formulas.

As for the latter, $\Delta$Equ guarantees $(c)$, and $\Delta$C provides $(i)$.
\end{proof}

\begin{theorem}\label{thm.comp-EN}
${\bf (EN)^\Delta}$ is sound and strongly complete with respect to the class of all frames satisfying $(n)$.
\end{theorem}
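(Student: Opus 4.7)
The plan is to follow the same template as the proof of Theorem~\ref{thm.comp-EC}: invoke Theorem~\ref{thm.comp-E} to reduce the task to (i) verifying the soundness of the new axiom $\Delta\text{N}$ on $(n)$-frames and (ii) showing that the canonical neighborhood function $N^c$ from Definition~\ref{def.cm-no-m} possesses property $(n)$. Because $\Delta\text{N}$ is a single axiom and the canonical model for systems excluding $\Delta\text{M}$ has already been set up, no new machinery is required.

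For soundness, I would observe that in any $(n)$-model $\M = \lr{S, N, V}$ and any state $s$, we have $\top^\M = S \in N(s)$ by hypothesis, so by the semantic clause for $\Delta$, $\M, s \vDash \Delta\top$. Thus $\Delta\text{N}$ is valid on the class of $(n)$-frames, and since all other axioms and rules of $\bf E^\Delta$ are already sound (by Theorem~\ref{thm.comp-E}), so is ${\bf (EN)^\Delta}$.

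For strong completeness, I would take the canonical model $\M^c = \lr{S^c, N^c, V^c}$ for ${\bf (EN)^\Delta}$ as specified in Definition~\ref{def.cm-no-m}. The truth lemma $\M^c, s \vDash \phi \iff \phi \in s$ holds exactly as in the proof of Theorem~\ref{thm.comp-E}, since the canonical function is defined the same way and ${\bf (EN)^\Delta}$ extends $\bf E^\Delta$. Hence it remains only to check that $N^c$ has property $(n)$, i.e.\ that $S^c \in N^c(s)$ for every $s \in S^c$. Since $\Delta\text{N} = \Delta\top$ is an axiom of ${\bf (EN)^\Delta}$, it lies in every maximal consistent set $s$, and by definition of $N^c$ we have $|\top| \in N^c(s)$. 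As $|\top| = S^c$, property $(n)$ follows immediately.

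I do not anticipate a real obstacle here: the argument is essentially bookkeeping. The only subtlety worth making explicit is that the truth lemma for $\bf E^\Delta$ transfers verbatim because the canonical function for systems excluding $\Delta\text{M}$ has the same shape in all such extensions, so adding $\Delta\text{N}$ to the system only adds information to each maximal consistent set and never disrupts the inductive clause for $\Delta$. From strong consistency-implies-satisfiability in the resulting $(n)$-model, strong completeness follows in the standard way.
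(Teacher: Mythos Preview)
Your proposal is correct and follows exactly the template of the paper's own proof, which likewise reduces the task (via Theorem~\ref{thm.comp-E}) to checking that $N^c$ possesses $(n)$, a fact that is immediate from axiom $\Delta\text{N}$ and the definition of $N^c$. Your write-up simply spells out the soundness of $\Delta\text{N}$ and the truth-lemma transfer more explicitly than the paper does.
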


\begin{proof}
It suffices to show that $N^c$ possesses the property $(n)$. This is immediate due to $\Delta$N and the definition of $N^c$.
\end{proof}

By Thm.~\ref{thm.comp-EC} and Thm.~\ref{thm.comp-EN}, we obtain the following result.
\begin{theorem}
${\bf (ECN)^\Delta}$ is sound and strongly complete with respect to the class of all frames satisfying $(i)\&(c)\&(n)$.
\end{theorem}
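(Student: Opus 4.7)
The plan is to observe that this result is a direct corollary of Theorems~\ref{thm.comp-EC} and~\ref{thm.comp-EN}, obtained by merging their canonical model constructions. Since Definition~\ref{def.cm-no-m} makes no reference to which specific axioms are included beyond excluding $\Delta$M, the same canonical model $\M^c=\lr{S^c,N^c,V^c}$ can be built for ${\bf (ECN)^\Delta}$, where $S^c$ now consists of maximal ${\bf (ECN)^\Delta}$-consistent sets.

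For soundness, I would verify that each axiom of ${\bf (ECN)^\Delta}$ (namely $\Delta$Equ, $\Delta$C, and $\Delta$N, on top of \textsf{TAUT} and \textsf{RE}$\Delta$) is valid on $(i)\&(c)\&(n)$-frames. This is immediate because this frame class is a subclass of both the $(i)\&(c)$-frames (on which $\Delta$Equ and $\Delta$C are valid by Thm.~\ref{thm.comp-EC}) and the $(n)$-frames (on which $\Delta$N is valid by Thm.~\ref{thm.comp-EN}); the rule \textsf{RE}$\Delta$ preserves validity on any frame class.

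For strong completeness, the truth lemma $\M^c,s\vDash\phi \iff \phi\in s$ is inherited from the proof of Theorem~\ref{thm.comp-E}, since the canonical model construction is identical. What remains is to verify that $N^c$ simultaneously possesses the properties $(i)$, $(c)$, and $(n)$. The three arguments used in the earlier theorems can simply be reused: $\Delta$Equ yields closure under complements $(c)$, $\Delta$C together with \textsf{RE}$\Delta$ yields closure under intersections $(i)$, and $\Delta$N together with the definition of $N^c$ gives that $S^c\in N^c(s)$, i.e.\ $(n)$. Since ${\bf (ECN)^\Delta}$ contains all three axioms, all three properties hold at every point of $S^c$.

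There is no genuine obstacle in this proof; the only thing one needs to confirm is that the three closure arguments for $N^c$ are mutually compatible, which they are because each one only asserts the presence of additional neighborhoods and none excludes any. This is precisely why the author states the theorem as a consequence of the two preceding results rather than giving an independent argument.
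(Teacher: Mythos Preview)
Your proposal is correct and follows essentially the same approach as the paper, which simply records the theorem as an immediate consequence of Theorems~\ref{thm.comp-EC} and~\ref{thm.comp-EN}. You have merely spelled out in more detail how the soundness and canonical-model arguments from those two results combine, which is exactly the intended reasoning.
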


\section{Systems including $\Delta$M}\label{sec.sys-m}

In this section, we show that the systems including $\Delta$M listed above are sound and strongly complete with respect to the corresponding frame classes.

We first consider the system ${\bf M^\Delta}$. The following result tells us that ${\bf M^\Delta}$ is sound with respect to the class of frames satisfying $(s)$.






\begin{proposition}
$\Delta$M is valid on the class of frames satisfying $(s)$.
\end{proposition}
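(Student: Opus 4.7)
The plan is to verify this by a direct semantic argument, splitting on which clause of the $\Delta$-semantics witnesses $\Delta\phi$. Fix an $(s)$-model $\M=\lr{S,N,V}$ and a state $s\in S$ with $\M,s\vDash\Delta\phi$; I would then unfold the truth clause for $\Delta$ to get two cases, namely $\phi^\M\in N(s)$ or $S\setminus\phi^\M\in N(s)$, and handle each by supplementation.

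In the first case, the key observation is that $\phi^\M\subseteq\phi^\M\cup\psi^\M=(\phi\vee\psi)^\M\subseteq S$, so closure under supersets bumps $\phi^\M$ up to $(\phi\vee\psi)^\M\in N(s)$, which already witnesses $\Delta(\phi\vee\psi)$. In the second case, $S\setminus\phi^\M$ coincides with $(\neg\phi)^\M$, which sits inside $(\neg\phi)^\M\cup\chi^\M=(\neg\phi\vee\chi)^\M\subseteq S$; the same appeal to $(s)$ places $(\neg\phi\vee\chi)^\M$ in $N(s)$ and so witnesses $\Delta(\neg\phi\vee\chi)$. In either case the required disjunction $\Delta(\phi\vee\psi)\vee\Delta(\neg\phi\vee\chi)$ holds at $s$, and since the model, state, and formulas were arbitrary, validity on the $(s)$-frame class follows.

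There is no real obstacle here; the proof is essentially a one-line application of supplementation per case. The only point that requires a moment's thought is lining up the two clauses of the semantics of $\Delta\phi$ with the correct disjuncts of the consequent: the ``positive'' clause $\phi^\M\in N(s)$ has to be pushed toward $\Delta(\phi\vee\psi)$, while the ``negative'' clause $S\setminus\phi^\M\in N(s)$ has to be pushed toward $\Delta(\neg\phi\vee\chi)$ (using the identity $S\setminus\phi^\M=(\neg\phi)^\M$). Once this pairing is made, supplementation does all the work, and no further properties of $N$ are used.
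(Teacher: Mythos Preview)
Your proof is correct and essentially identical to the paper's: both argue directly by unfolding the semantics of $\Delta\phi$ into the two cases $\phi^\M\in N(s)$ or $(\neg\phi)^\M\in N(s)$, and apply supplementation to push each into the appropriate disjunct of the consequent. The only cosmetic difference is that you spell out the superset inclusions and the identity $S\setminus\phi^\M=(\neg\phi)^\M$ explicitly, which the paper leaves implicit.
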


\begin{proof}
Let $\M=\lr{S,N,V}$ be a $(s)$-model and $s\in S$. Suppose that $\M,s\vDash\Delta\phi$, then $\phi^\M\in N(s)$ or $(\neg\phi)^\M\in N(s)$. If $\phi^\M\in N(s)$, then by $(s)$, $\phi^\M\cup\psi^\M\in N(s)$, which implies $\M,s\vDash\Delta(\phi\vee\psi)$; if $(\neg\phi)^\M\in N(s)$, then similarly, we can obtain $\M,s\vDash\Delta (\neg\phi\vee\chi)$. Either case gives us $\M,s\vDash\Delta (\phi\vee\psi)\vee\Delta(\neg\phi\vee\chi)$, as required.
\end{proof}

For the completeness, we construct the canonical neighborhood model for ${\bf M^\Delta}$, where the crucial definition is the canonical neighborhood function. The definition of $N^c$ below is inspired by a function $\lambda$ introduced in~\cite{DBLP:journals/ndjfl/Kuhn95}.\footnote{The difference between $N^c$ and $\lambda$ lies in the codomains: $N^c$'s codomain is $\mathcal{P}(\mathcal{P}(S^c))$, whereas $\lambda$'s is $\mathcal{P}(\mathcal{L}(\Delta))$.}

\begin{definition}\label{def.canonicalmodel} Let $\Gamma$ be a system including $\Delta$M.
A triple $\M^c=\lr{S^c,N^c,V^c}$ is a {\em canonical model} for $\Gamma$, if
\begin{itemize}
\item $S^c=\{s\mid s\text{ is a maximal consistent set for }\Gamma\}$,
\item For each $s\in S^c$, $N^c(s)=\{|\phi|\mid \Delta(\phi\vee\psi)\in s\text{ for every }\psi\}$,
\item For each $p\in \BP$, $V^c(p)=|p|$.
\end{itemize}
\end{definition}

We need to show that $N^c$ is well-defined.
\begin{lemma}
If $|\phi|=|\psi|$, then $|\phi|\in N^c(s)$ iff $|\psi|\in N^c(s)$.
\end{lemma}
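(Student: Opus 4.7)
The plan is to unpack the definition of $N^c(s)$ and reduce well-definedness to a single application of the inference rule RE$\Delta$. Unfolding the definition, $|\phi|\in N^c(s)$ means $\Delta(\phi\vee\chi)\in s$ for every $\chi\in\mathcal{L}(\Delta)$, and analogously for $|\psi|\in N^c(s)$. So the goal becomes: assuming $|\phi|=|\psi|$, show that $\Delta(\phi\vee\chi)\in s$ iff $\Delta(\psi\vee\chi)\in s$, uniformly in $\chi$.

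The first step is the bridge from semantic equality of proof sets to syntactic provable equivalence. I would argue that $|\phi|=|\psi|$ implies $\vdash_\Gamma \phi\lra\psi$. This is the standard Lindenbaum observation: if $\phi\lra\psi$ were not derivable, then $\neg(\phi\lra\psi)$ would be $\Gamma$-consistent, hence extend to some maximal consistent set $t\in S^c$ separating $\phi$ from $\psi$, contradicting $|\phi|=|\psi|$.

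Once $\vdash\phi\lra\psi$ is in hand, propositional reasoning (TAUT together with modus ponens, which is implicit in any such system) gives $\vdash (\phi\vee\chi)\lra(\psi\vee\chi)$ for an arbitrary $\chi$. Applying the congruence rule RE$\Delta$ yields $\vdash \Delta(\phi\vee\chi)\lra\Delta(\psi\vee\chi)$. Since $s$ is a maximal consistent set, this biconditional membership transfers: $\Delta(\phi\vee\chi)\in s$ iff $\Delta(\psi\vee\chi)\in s$. Because $\chi$ was arbitrary, the universally quantified conditions defining membership in $N^c(s)$ coincide for $\phi$ and $\psi$, which is what we needed.

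There is no real obstacle here; the lemma is a sanity check that $N^c$ respects the set-of-subsets coding rather than secretly depending on a formula representative. The only step requiring a touch of care is the implicit use of Lindenbaum's lemma to pass from equality of proof sets to provable equivalence, but this is background machinery independent of the specific system $\Gamma$ as long as $\Gamma$ contains classical propositional logic and is closed under modus ponens.
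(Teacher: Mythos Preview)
Your proof is correct and follows essentially the same route as the paper: from $|\phi|=|\psi|$ infer $\vdash\phi\lra\psi$, lift to $\vdash(\phi\vee\chi)\lra(\psi\vee\chi)$, apply RE$\Delta$, and conclude via maximality of $s$. You are simply more explicit than the paper about invoking Lindenbaum's lemma for the first step.
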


\begin{proof}
Suppose that $|\phi|=|\psi|$, then $\vdash\phi\lra\psi$, then for every $\chi$, $\vdash\phi\vee\chi\lra\psi\vee\chi$. By RE$\Delta$, we have $\vdash\Delta(\phi\vee\chi)\lra\Delta(\psi\vee\chi)$, thus for every $\chi$, $\Delta(\phi\vee\chi)\in s$ iff for every $\chi$, $\Delta(\psi\vee\chi)\in s$, and therefore $|\phi|\in N^c(s)$ iff $|\psi|\in N^c(s)$.
\end{proof}

\begin{lemma}\label{lemma.truthlemma-mc} Let $\M^c$ be a canonical model for ${\bf M^\Delta}$. Then for all $\phi\in \mathcal{L}(\Delta)$, for all $s\in S^c$, we have
$\M^c,s\vDash\phi\iff \phi\in s,$ i.e. $\phi^{\M^c}=|\phi|$.
\end{lemma}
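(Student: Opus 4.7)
The proof proceeds by induction on the structure of $\phi$. The atomic case is immediate from $V^c(p)=|p|$, and the Boolean cases follow from the inductive hypothesis together with the standard maximal-consistency facts $|\neg\alpha|=S^c\setminus|\alpha|$ and $|\alpha\wedge\beta|=|\alpha|\cap|\beta|$. The only substantive case is $\phi=\Delta\alpha$. Unfolding the semantics, $\M^c,s\vDash\Delta\alpha$ iff $\alpha^{\M^c}\in N^c(s)$ or $S^c\setminus\alpha^{\M^c}\in N^c(s)$; applying the inductive hypothesis to $\alpha$ gives $\alpha^{\M^c}=|\alpha|$ and hence $S^c\setminus\alpha^{\M^c}=|\neg\alpha|$. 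Unfolding the definition of $N^c$ in Definition~\ref{def.canonicalmodel}, the goal reduces to
\[
\Delta\alpha\in s \iff \bigl(\forall\beta:\Delta(\alpha\vee\beta)\in s\bigr)\ \text{or}\ \bigl(\forall\beta:\Delta(\neg\alpha\vee\beta)\in s\bigr).
\]

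For the right-to-left direction I would simply instantiate. If the first clause holds, take $\beta:=\alpha$ to obtain $\Delta(\alpha\vee\alpha)\in s$, and combine RE$\Delta$ with the tautology $(\alpha\vee\alpha)\lra\alpha$ to conclude $\Delta\alpha\in s$. If the second clause holds, take $\beta:=\neg\alpha$; the same move yields $\Delta\neg\alpha\in s$, and one further application of $\Delta$Equ delivers $\Delta\alpha\in s$. Each step is routine, and this direction does not use $\Delta$M at all.

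The left-to-right direction is where the whole construction pays off, and it is the step I expect to require the most care. Assume $\Delta\alpha\in s$. For any choice of $\beta,\gamma\in\mathcal{L}(\Delta)$, the axiom instance $\Delta\alpha\to\Delta(\alpha\vee\beta)\vee\Delta(\neg\alpha\vee\gamma)$ of $\Delta$M lies in $s$, so by modus ponens and maximality at least one of $\Delta(\alpha\vee\beta)$, $\Delta(\neg\alpha\vee\gamma)$ belongs to $s$. I would now argue contrapositively: if neither universally-quantified clause on the right of the biconditional held, there would exist witnesses $\beta_0$ with $\Delta(\alpha\vee\beta_0)\notin s$ and $\gamma_0$ with $\Delta(\neg\alpha\vee\gamma_0)\notin s$; substituting this particular pair into the above instance of $\Delta$M would then produce a disjunction in $s$ neither of whose disjuncts is in $s$, contradicting maximality. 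The crux is this ``independent witnesses'' move, which works precisely because $\Delta$M quantifies $\beta$ and $\gamma$ independently; that feature is what motivates the shape of $N^c$ in Definition~\ref{def.canonicalmodel}, and it is the heart of the lemma.
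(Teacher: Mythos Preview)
Your proof is correct and follows essentially the same route as the paper: induction on $\phi$, with the $\Delta$-case split into the right-to-left direction by instantiating the universal quantifier (the paper simply says ``in particular, $\Delta\phi\in s$'') and the left-to-right direction by a contrapositive/contradiction argument that extracts witnesses $\beta_0,\gamma_0$ and feeds them into $\Delta$M. Your write-up is slightly more explicit about the instantiation and the role of RE$\Delta$ and $\Delta$Equ, but the underlying argument is identical.
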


\begin{proof}
By induction on $\phi$. The only nontrivial case is $\Delta\phi$.


Suppose, for a contradiction, that $\Delta\phi\in s$ but $\M^c,s\nvDash\Delta\phi$. Then by induction hypothesis, we obtain $|\phi|\notin N^c(s)$, and $S^c\backslash|\phi|\notin N^c(s)$, i.e. $|\neg\phi|\notin N^c(s)$. Thus $\Delta(\phi\vee\psi)\notin s$ for some $\psi$, and $\Delta(\neg\phi\vee\chi)\notin s$ for some $\chi$. Using axiom $\Delta$M, we obtain $\Delta\phi\notin s$: a contradiction.

Conversely, assume that $\M^c,s\vDash\Delta\phi$, to show that $\Delta\phi\in s$. By assumption and induction hypothesis, we have $|\phi|\in N^c(s)$, or $S^c\backslash|\phi|\in N^c(s)$, i.e. $|\neg\phi|\in N^c(s)$. If $|\phi|\in N^c(s)$, then for every $\psi$, $\Delta(\phi\vee\psi)\in s$. In particular, $\Delta\phi\in s$; if $|\neg\phi|\in N^c(s)$, then by a similar argument, we obtain $\Delta\neg\phi\in s$, thus $\Delta\phi\in s$. Therefore, $\Delta\phi\in s$.
\end{proof}

Note that $\M^c$ is not necessarily supplemented. Thus we need to define a notion of supplementation, which comes from~\cite{Chellas:1980}.

\begin{definition}
Let $\M=\lr{S,N,V}$ be a neighborhood model. The {\em supplementation} of $\M$, denoted $\M^+$, is a triple $\lr{S,N^+,V}$, where for each $s\in S$, $N^+(s)$ is the superset closure of $N(s)$, i.e. for every $s\in S$ and $X\subseteq S$,
$$N^+(s)=\{X\mid Y\subseteq X\text{ for some }Y\in N(s)\}.$$
\end{definition}

It is easy to see that $\M^+$ is supplemented. Moreover, $N(s)\subseteq N^+(s)$. The proof below is a routine work.

\begin{proposition}\label{prop.i&n}
Let $\M$ be a neighborhood model. If $\M$ possesses the property $(i)$, then so does $\M^+$; if $\M$ possesses the property $(n)$, then so does $\M^+$.
\end{proposition}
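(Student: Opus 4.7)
The plan is to handle the two properties separately and directly from the definitions.

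For $(n)$, the argument is essentially trivial: if $\M$ has $(n)$, then $S \in N(s)$ for every $s \in S$; since $S$ is a witness to itself (i.e.\ $S \subseteq S$ with $S \in N(s)$), we obtain $S \in N^+(s)$. One can equally just appeal to the already noted inclusion $N(s) \subseteq N^+(s)$.

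For $(i)$, I would fix $s \in S$ and take arbitrary $X, Y \in N^+(s)$. By the definition of the supplementation, there exist $X', Y' \in N(s)$ with $X' \subseteq X$ and $Y' \subseteq Y$. Since $\M$ has property $(i)$, $X' \cap Y' \in N(s)$. Now $X' \cap Y' \subseteq X \cap Y$, so by the very definition of $N^+(s)$, the set $X \cap Y$ belongs to $N^+(s)$. This gives $(i)$ for $\M^+$.

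Both halves are direct unpackings of definitions, so I would not expect any genuine obstacle; the only thing to be a bit careful about is not confusing $(i)$ with $(c)$ (the proposition deliberately omits $(c)$, which in fact need \emph{not} transfer to $\M^+$: a set $X$ lying in $N(s)$ does not force $S \setminus X$ to sit above some element of $N(s)$ once we take the superset closure). This is presumably why the author restricts the claim to $(i)$ and $(n)$, and it is worth flagging in the proof that the argument would fail if one replaced $(i)$ by $(c)$.
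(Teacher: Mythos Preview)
Your proposal is correct and is precisely the routine verification the paper has in mind; the paper itself omits the proof entirely, merely remarking that ``the proof below is a routine work.'' Your side comment about why $(c)$ need not be preserved under supplementation is accurate and a helpful addition, though not required for the proposition as stated.
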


We will denote the supplementation of $\M^c$ by $(\M^c)^+=\lr{S^c,(N^c)^+,V^c}$. To demonstrate the completeness of ${\bf M^\Delta}$ with respect to the class of $(s)$-frames, we need only show that $(\M^c)^+$ is a canonical model for ${\bf M^\Delta}$. That is,
\begin{lemma}\label{lem.canonicalmodel} For each $s\in S^c$,
$$|\phi|\in (N^c)^+(s)\iff \Delta(\phi\vee\psi)\in s\text{ for every }\psi.$$
\end{lemma}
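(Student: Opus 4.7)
The plan is to prove the biconditional by handling the two directions separately, exploiting the fact that $(N^c)^+$ is obtained from $N^c$ by superset closure and that inclusions $|\chi| \subseteq |\phi|$ in the canonical model correspond to derivability $\vdash \chi \to \phi$.

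The right-to-left direction is essentially immediate. If $\Delta(\phi \vee \psi) \in s$ for every $\psi$, then by Definition~\ref{def.canonicalmodel} we have $|\phi| \in N^c(s)$, and since $N^c(s) \subseteq (N^c)^+(s)$ by construction of the supplementation, we conclude $|\phi| \in (N^c)^+(s)$.

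For the harder left-to-right direction, suppose $|\phi| \in (N^c)^+(s)$. By definition of the supplementation there exists some $Y \in N^c(s)$ with $Y \subseteq |\phi|$, and by the definition of $N^c$ we can write $Y = |\chi|$ for some $\chi$ satisfying $\Delta(\chi \vee \psi') \in s$ for every $\psi'$. The key step is then: from $|\chi| \subseteq |\phi|$ (a fact about maximal consistent sets) we extract $\vdash \chi \to \phi$, hence for any fixed $\psi$ we have $\vdash (\chi \vee \phi \vee \psi) \leftrightarrow (\phi \vee \psi)$. Instantiating the hypothesis with $\psi' := \phi \vee \psi$ gives $\Delta(\chi \vee \phi \vee \psi) \in s$, and applying \textnormal{RE}$\Delta$ yields $\Delta(\phi \vee \psi) \in s$, as required.

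The only subtle point that deserves care is the passage from $|\chi| \subseteq |\phi|$ to $\vdash \chi \to \phi$; this is a standard maximal-consistent-set argument (if $\chi \to \phi$ were not a theorem, then $\chi \wedge \neg \phi$ would be consistent and could be extended to a witness in $|\chi| \setminus |\phi|$). Once that is in hand, the rest of the argument is simply a clever choice of the witness $\psi' = \phi \vee \psi$ together with congruence under \textnormal{RE}$\Delta$; no use of $\Delta$M is needed here, since axiom $\Delta$M has already done its work in guaranteeing that $(\M^c)^+$ is the correct object to work with (it will be invoked in the subsequent truth lemma for $(\M^c)^+$, not here).
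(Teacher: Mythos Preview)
Your proof is correct and follows essentially the same route as the paper's own argument: the $\Longleftarrow$ direction via $N^c(s)\subseteq (N^c)^+(s)$, and the $\Longrightarrow$ direction by picking a witness $|\chi|\subseteq|\phi|$ in $N^c(s)$, instantiating the universal hypothesis at $\psi'=\phi\vee\psi$, and using $\vdash\chi\to\phi$ together with RE$\Delta$ to collapse $\Delta(\chi\vee\phi\vee\psi)$ to $\Delta(\phi\vee\psi)$. Your added justification of the step $|\chi|\subseteq|\phi|\Rightarrow\vdash\chi\to\phi$ via Lindenbaum is a welcome clarification; the only minor inaccuracy is the aside about where $\Delta$M is used---in the paper it is invoked in the \emph{preceding} truth lemma for $\M^c$, not a subsequent one---but this does not affect the proof of the present lemma.
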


\begin{proof}
`$\Longleftarrow$': immediate by $N^c(s)\subseteq (N^c)^+(s)$ for each $s\in S^c$ and the definition of $N^c$.

`$\Longrightarrow$': Suppose that $|\phi|\in (N^c)^+(s)$, to show that $\Delta(\phi\vee\psi)\in s\text{ for every }\psi$. By supposition, $X\subseteq |\phi|$ for some $X\in N^c(s)$. Then there is a $\chi$ such that $|\chi|=X$, and thus $\Delta(\chi\vee\psi)\in s$ for every $\psi$, in particular $\Delta(\chi\vee\phi\vee\psi)\in s$. From $|\chi|\subseteq |\phi|$ follows that $\vdash\chi\to\phi$, thus $\vdash\chi\vee\phi\vee\psi\lra\phi\vee\psi$, and hence $\vdash\Delta(\chi\vee\phi\vee\psi)\lra\Delta(\phi\vee\psi)$ by RE$\Delta$. Therefore $\Delta(\phi\vee\psi)\in s$ for every $\psi$.
\end{proof}

\weg{\begin{lemma}
For all $\phi\in \mathcal{L}(\Delta)$, for all $s\in S^c$, we have
$\M^+,s\vDash\phi\iff \phi\in s,$ i.e. $\phi^{\M^+}=|\phi|$.
\end{lemma}

\begin{proof}
By induction on $\phi$. The nontrivial case is $\Delta\phi$.

Suppose that $\Delta\phi\in s$, to show that $\M^+,s\vDash\Delta\phi$. By supposition and axiom $\text{Dis}\Delta$, $\Delta(\phi\vee\psi)\vee\Delta(\neg\phi\vee\chi)\in s$ for arbitrary $\psi,\chi$. Thus $\Delta(\phi\vee\psi)\in s$ for all $\psi$, or $\Delta(\neg\phi\vee\chi)\in s$ for all $\chi$. The first case implies $|\phi|\in N^c(s)$, by induction hypothesis, $\phi^{\M^+}\in N^c(s)$, thus $\phi^{\M^+}\in N^+(s)$; similarly, the second case implies $(\neg\phi)^{\M^+}\in N^+(s)$. Both entails $\M^+,s\vDash\Delta\phi$.

Conversely, assume that $\M^+,s\vDash\Delta\phi$, then by induction hypothesis, $|\phi|\in N^+(s)$ or $|\neg\phi|\in N^+(s)$. If $|\phi|\in N^+(s)$, then $X\subseteq |\phi|$ for some $X\in N^c(s)$, and hence for some $\psi$, $|\psi|=X\in N^c(s)$, i.e. $\Delta(\psi\vee\psi')\in s$ for all $\psi'$. By $|\psi|\subseteq \phi$, we have $\vdash\psi\to\phi$, then $\vdash\psi\vee\phi\lra\phi$, by RE$\Delta$, $\vdash\Delta(\psi\vee\phi)\lra\Delta\phi$. Since $\Delta(\psi\vee\phi)\in s$, we conclude that $\Delta\phi\in s$. Similarly, from $|\neg\phi|\in N^+(s)$, it can follow that $\Delta\neg\phi\in s$, and therefore $\Delta\phi\in s$, as desired.
\end{proof}}

By Lemma~\ref{lemma.truthlemma-mc} and Lemma~\ref{lem.canonicalmodel}, we have

\begin{lemma}
For all $\phi\in \mathcal{L}(\Delta)$, for all $s\in S^c$, we have
$(\M^c)^+,s\vDash\phi\iff \phi\in s,$ i.e. $\phi^{(\M^c)^+}=|\phi|$.
\end{lemma}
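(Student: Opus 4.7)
The plan is to proceed by structural induction on $\phi$. Because $(\M^c)^+$ shares its domain $S^c$ and its valuation $V^c$ with $\M^c$, the base case $\phi = p$ is immediate, and the Boolean cases $\phi = \neg\psi$ and $\phi = \psi \land \chi$ propagate the induction hypothesis without any contact with the neighborhood function. The only case of substance is $\phi = \Delta\psi$, and the clean way to handle it is to show
\[
(\M^c)^+, s \vDash \Delta\psi \iff \M^c, s \vDash \Delta\psi,
\]
after which Lemma~\ref{lemma.truthlemma-mc} closes the equivalence with $\Delta\psi \in s$.

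To establish that two-model equivalence I would first apply the induction hypothesis to $\psi$ to rewrite $\psi^{(\M^c)^+}$ as $|\psi|$ and, using $|\neg\psi| = S^c\setminus|\psi|$, rewrite $S^c \setminus \psi^{(\M^c)^+}$ as $|\neg\psi|$. Unfolding the semantic clause for $\Delta$ then identifies $(\M^c)^+, s \vDash \Delta\psi$ with the disjunction ``$|\psi| \in (N^c)^+(s)$ or $|\neg\psi| \in (N^c)^+(s)$''. Next, Lemma~\ref{lem.canonicalmodel} (applied once with argument $\psi$ and once with $\neg\psi$) translates each disjunct into the object-language condition ``$\Delta(\psi\vee\chi)\in s$ for every $\chi$'' (resp.\ ``$\Delta(\neg\psi\vee\chi)\in s$ for every $\chi$''). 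By the very definition of $N^c$ this collapses to ``$|\psi| \in N^c(s)$ or $|\neg\psi| \in N^c(s)$'', which is precisely the unfolded truth condition of $\Delta\psi$ in $\M^c$ (again using the induction hypothesis and $|\neg\psi|=S^c\setminus|\psi|$). A final appeal to Lemma~\ref{lemma.truthlemma-mc} yields $\Delta\psi \in s$, completing the chain.

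There is no genuine obstacle at this stage, since the two cited lemmas have already absorbed all the axiomatic work: Lemma~\ref{lem.canonicalmodel} takes care of the gap between $N^c$ and $(N^c)^+$ on sets of the form $|\chi|$, and Lemma~\ref{lemma.truthlemma-mc} takes care of the $\Delta\text{M}$, $\Delta\text{Equ}$ and RE$\Delta$ reasoning linking the $\M^c$-semantics to membership in $s$. The only point to watch is bookkeeping, namely that Lemma~\ref{lem.canonicalmodel} is invoked with both $\psi$ and $\neg\psi$, and that the equality $S^c \setminus |\psi| = |\neg\psi|$ is the bridge aligning the supplemented-semantic clause with that reformulation.
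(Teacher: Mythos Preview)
Your argument is correct and rests on the same two ingredients the paper invokes, Lemma~\ref{lemma.truthlemma-mc} and Lemma~\ref{lem.canonicalmodel}. The only structural difference is that the paper reads Lemma~\ref{lem.canonicalmodel} as saying that $(\M^c)^+$ itself satisfies the defining clause of a canonical model (on proof sets), so Lemma~\ref{lemma.truthlemma-mc} applies to $(\M^c)^+$ verbatim without a fresh induction; you instead redo the induction and, in the $\Delta\psi$ case, detour through $\M^c$-semantics before invoking Lemma~\ref{lemma.truthlemma-mc}. Both routes are equally valid and equally short once the two lemmas are in hand.
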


With a routine work, we obtain
\begin{theorem}\label{thm.comp-m}
${\bf M^\Delta}$ is sound and strongly complete with respect to the class of frames satisfying $(s)$.
\end{theorem}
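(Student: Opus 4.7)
The plan is to assemble the pieces already established. Soundness is immediate: the axioms TAUT, $\Delta$Equ and the rule RE$\Delta$ are valid on every neighborhood frame (this was used in Theorem~\ref{thm.comp-E}), and the preceding proposition shows that $\Delta$M is valid on the class of $(s)$-frames. So every theorem of ${\bf M^\Delta}$ is valid on $(s)$-frames.

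For strong completeness, I would run the standard Henkin-style argument using the supplemented canonical model $(\M^c)^+$. Given an ${\bf M^\Delta}$-consistent set $\Gamma\subseteq\mathcal{L}(\Delta)$, extend it via Lindenbaum's lemma to some maximal consistent set $s\in S^c$, so that $\Gamma\subseteq s$. The truth lemma that follows from Lemma~\ref{lemma.truthlemma-mc} together with Lemma~\ref{lem.canonicalmodel} gives $\phi^{(\M^c)^+}=|\phi|$ for every $\phi\in\mathcal{L}(\Delta)$, so in particular $(\M^c)^+,s\vDash\phi$ for every $\phi\in \Gamma$. Hence $\Gamma$ is satisfied in $(\M^c)^+$ at the state $s$.

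It remains to observe that $(\M^c)^+$ belongs to the intended frame class, namely it is an $(s)$-model. This is immediate from the definition of supplementation: $(N^c)^+(s)$ is by construction the superset closure of $N^c(s)$, so if $X\in (N^c)^+(s)$ and $X\subseteq Y\subseteq S^c$, then there is $Z\in N^c(s)$ with $Z\subseteq X\subseteq Y$, whence $Y\in (N^c)^+(s)$. Combining this with the satisfaction of $\Gamma$ above yields strong completeness of ${\bf M^\Delta}$ with respect to the class of $(s)$-frames.

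There is no real obstacle left: every nontrivial step, and in particular the delicate verification that the supplementation does not destroy the truth lemma (which requires the $\Delta$M-based argument of Lemma~\ref{lemma.truthlemma-mc} and the squeeze argument via RE$\Delta$ in Lemma~\ref{lem.canonicalmodel}), has already been carried out. The theorem is a direct packaging of soundness, Lindenbaum, the truth lemma for $(\M^c)^+$, and the evident fact that supplementations are supplemented.
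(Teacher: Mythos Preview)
Your proposal is correct and matches the paper's approach exactly: the paper simply writes ``With a routine work, we obtain'' after stating the truth lemma for $(\M^c)^+$, and what you have written is precisely that routine work spelled out (soundness from the proposition on $\Delta$M, Lindenbaum extension, the truth lemma for $(\M^c)^+$ obtained from Lemmas~\ref{lemma.truthlemma-mc} and~\ref{lem.canonicalmodel}, and the observation that supplementations are $(s)$-models).
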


We are now in a position to deal with the sound and strong completeness of ${\bf R^\Delta}$. First, the soundness follows from the following result.
\begin{proposition}
$\Delta$C is valid on the class of quasi-filters.
\end{proposition}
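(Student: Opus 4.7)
The plan is to fix a quasi-filter model $\M=\lr{S,N,V}$ and a state $s\in S$, assume $\M,s\vDash\Delta\phi\land\Delta\psi$, and show $\M,s\vDash\Delta(\phi\land\psi)$. By the semantic clause for $\Delta$, the hypothesis splits into four cases according to which of $\phi^\M$ or $S\setminus\phi^\M$ lies in $N(s)$, and likewise for $\psi$. I would dispatch the case where $\phi^\M\in N(s)$ and $\psi^\M\in N(s)$ using $(i)$: intersection closure gives $\phi^\M\cap\psi^\M\in N(s)$, and since $(\phi\land\psi)^\M=\phi^\M\cap\psi^\M$, this yields $\M,s\vDash\Delta(\phi\land\psi)$ directly.

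For the remaining three cases I would use $(s)$ together with the set-theoretic identity
\[
S\setminus(\phi\land\psi)^\M=(S\setminus\phi^\M)\cup(S\setminus\psi^\M).
\]
Whenever at least one of $S\setminus\phi^\M$ or $S\setminus\psi^\M$ is in $N(s)$, that set is a subset of $S\setminus(\phi\land\psi)^\M$, so supplementation pushes it up to $S\setminus(\phi\land\psi)^\M\in N(s)$, again giving $\M,s\vDash\Delta(\phi\land\psi)$. This handles the two mixed cases ($\phi^\M$ and $S\setminus\psi^\M$, or $S\setminus\phi^\M$ and $\psi^\M$) as well as the case where both $S\setminus\phi^\M$ and $S\setminus\psi^\M$ belong to $N(s)$.

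There is no real obstacle here; the only mild subtlety is observing that closure under complements $(c)$ is not needed. Cases involving a complement are all absorbed into $(s)$ via the superset relation above, so only $(i)$ and $(s)$ — precisely the quasi-filter conditions — are used.
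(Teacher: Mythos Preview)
Your proof is correct and follows essentially the same route as the paper: the case $\phi^\M,\psi^\M\in N(s)$ is handled by $(i)$, and all remaining cases are handled by $(s)$ via the inclusion of $S\setminus\phi^\M$ (respectively $S\setminus\psi^\M$) into $S\setminus(\phi\land\psi)^\M$. The paper organizes the case split as three overlapping cases rather than your four exhaustive ones, but the argument is the same.
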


\begin{proof}
Let $\M=\lr{S,N,V}$ be a $(s)$-model and $s\in S$. Suppose that $\M,s\vDash\Delta\phi\land\Delta\psi$, then $\phi^\M\in N(s)$ or $(\neg\phi)^\M\in N(s)$, and $\psi^\M\in N(s)$ or $(\neg\psi)^\M\in N(s)$. Consider the following three cases:
\begin{itemize}
\item $\phi^\M\in N(s)$ and $\psi^\M\in N(s)$. By $(i)$, we obtain $\phi^\M\cap \psi^\M\in N(s)$, i.e. $(\phi\land\psi)^\M\in N(s)$, which gives $\M,s\vDash\Delta(\phi\land\psi)$.
\item $(\neg\phi)^\M\in N(s)$. By $(s)$, we infer $(\neg\phi)^\M\cup(\neg\psi)^\M\in N(s)$, i.e. $(\neg(\phi\land\psi))^\M\in N(s)$, which implies $\M,s\vDash\Delta(\phi\land\psi)$.
\item $(\neg\psi)^\M\in N(s)$. Similar to the second case, we can derive that $\M,s\vDash\Delta(\phi\land\psi)$.
\end{itemize}
\end{proof}

\begin{proposition}\label{prop.i} Let $\M^c$ be a canonical model for ${\bf R^\Delta}$. Then $\M^c$ possesses the property $(i)$. As a corollary, $(\M^c)^+$ also possesses the property $(i)$.
\end{proposition}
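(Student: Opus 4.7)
The plan is to unpack the definition of $N^c$ and show directly that the intersection of two canonical neighborhoods is again a canonical neighborhood, leveraging $\Delta\text{C}$ to combine witnesses and RE$\Delta$ to reshape the result. The corollary for $(\M^c)^+$ then follows from Proposition~\ref{prop.i&n}, which was stated earlier.

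\medskip

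First, I would take arbitrary $X, Y \in N^c(s)$ and use the definition of $N^c$ to pick formulas $\phi, \psi$ with $X = |\phi|$ and $Y = |\psi|$, where $\Delta(\phi\vee\chi) \in s$ for every $\chi$, and $\Delta(\psi\vee\chi) \in s$ for every $\chi$. Since proof sets in a maximal consistent set satisfy $|\phi| \cap |\psi| = |\phi \wedge \psi|$, it suffices to establish that $|\phi \wedge \psi| \in N^c(s)$, i.e.\ that $\Delta((\phi \wedge \psi)\vee\chi) \in s$ for every $\chi$.

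\medskip

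Fix an arbitrary $\chi$. From the two hypotheses (instantiating each with the same $\chi$) we have $\Delta(\phi\vee\chi) \in s$ and $\Delta(\psi\vee\chi) \in s$. By the axiom $\Delta\text{C}$ of ${\bf R^\Delta}$, we obtain $\Delta\bigl((\phi\vee\chi)\wedge(\psi\vee\chi)\bigr) \in s$. The propositional tautology $(\phi\vee\chi)\wedge(\psi\vee\chi) \lra (\phi\wedge\psi)\vee\chi$ together with the rule RE$\Delta$ then yields $\Delta((\phi\wedge\psi)\vee\chi) \in s$. Since $\chi$ was arbitrary, $|\phi \wedge \psi| \in N^c(s)$, proving $X \cap Y \in N^c(s)$.

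\medskip

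For the corollary, since $\M^c$ satisfies $(i)$, Proposition~\ref{prop.i&n} immediately gives that $(\M^c)^+$ also satisfies $(i)$. I don't foresee any real obstacle here: the key observation is simply the distribution identity $(\phi\vee\chi)\wedge(\psi\vee\chi) \lra (\phi\wedge\psi)\vee\chi$, which lets $\Delta\text{C}$ play the role needed to close the $N^c$ construction under intersection despite the ``for every $\psi$'' quantifier baked into the definition of $N^c$.
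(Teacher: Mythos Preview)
Your proof is correct and follows essentially the same approach as the paper's. In fact you are slightly more explicit: the paper jumps directly from $\Delta(\phi\vee\psi)\in s$ and $\Delta(\chi\vee\psi)\in s$ to $\Delta((\phi\land\chi)\vee\psi)\in s$ ``using axiom $\Delta$C'', whereas you spell out the intermediate use of the distribution tautology $(\phi\vee\chi)\wedge(\psi\vee\chi)\lra(\phi\wedge\psi)\vee\chi$ together with RE$\Delta$, which is exactly what is needed to make that step go through.
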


\begin{proof}
Suppose $X\in N^c(s)$ and $Y\in N^c(s)$, to show that $X\cap Y\in N^c(s)$. By supposition, there exist $\phi$ and $\chi$ such that $X=|\phi|$ and $Y=|\chi|$, and then $\Delta(\phi\vee\psi)\in s$ for every $\psi$, and $\Delta(\chi\vee\psi)\in s$ for every $\psi$. Using axiom $\Delta$C, we infer $\Delta((\phi\land\chi)\vee\psi)\in s$ for every $\psi$. Therefore, $|\phi\land\chi|\in N^c(s)$, i.e. $X\cap Y\in N^c(s)$. Then it follows that $(\M^c)^+$ also possesses the property $(i)$ from Prop.~\ref{prop.i&n}.
\end{proof}

The results below are now immediate, due to Thm.~\ref{thm.comp-m} and Prop.~\ref{prop.i}.
\begin{theorem}\label{thm.comp-r}
${\bf R^\Delta}$ is sound and strongly complete with respect to the class of quasi-filters.
\end{theorem}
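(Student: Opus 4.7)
The plan is to piece together what has already been set up. For soundness, I would observe that ${\bf R^\Delta}$ adds only the axiom $\Delta\text{C}$ to ${\bf M^\Delta}$. By Theorem~\ref{thm.comp-m}, every ${\bf M^\Delta}$ axiom is valid on $(s)$-frames, hence on quasi-filters (which enjoy $(s)$ by definition). The preceding proposition hands us validity of $\Delta\text{C}$ on quasi-filters. So soundness of ${\bf R^\Delta}$ on the class of quasi-filters follows immediately.

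For strong completeness, I would re-use the canonical model machinery from Sec.~\ref{sec.sys-m}. Since ${\bf R^\Delta}$ extends ${\bf M^\Delta}$ (so $\Delta$M and RE$\Delta$ are available), Definition~\ref{def.canonicalmodel} applies to give $\M^c$, and the truth-lemma argument of Lemma~\ref{lemma.truthlemma-mc} together with the supplementation-compatibility of Lemma~\ref{lem.canonicalmodel} go through unchanged. Thus for every $\phi\in\mathcal{L}(\Delta)$ and every $s\in S^c$, we still have $\M^c{}^+\!,s\vDash\phi$ iff $\phi\in s$.

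It then remains to check that $(\M^c)^+$ is a quasi-filter model. Property $(s)$ is automatic because $(\M^c)^+$ is by construction the supplementation of $\M^c$. Property $(i)$ is exactly the content of Proposition~\ref{prop.i}: $\Delta\text{C}$ propagates intersection-closure to $N^c$, and Proposition~\ref{prop.i&n} lifts $(i)$ from $N^c$ to $(N^c)^+$. Putting these together, any ${\bf R^\Delta}$-consistent set $\Gamma$ extends by Lindenbaum to some $s\in S^c$, and the truth lemma then yields $(\M^c)^+,s\vDash\Gamma$ on a quasi-filter model, giving strong completeness.

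There is no real obstacle here; the only thing to double-check is that the earlier truth-lemma argument is not secretly sensitive to the ambient system. Inspecting Lemmas~\ref{lemma.truthlemma-mc} and~\ref{lem.canonicalmodel}, the only ingredients used are $\Delta\text{M}$, $\Delta\text{Equ}$, RE$\Delta$, and the definition of $N^c$, all of which persist in ${\bf R^\Delta}$; so the argument transfers verbatim. The only genuinely new piece of work has already been carried out in Proposition~\ref{prop.i}, and the theorem is then just a packaging statement.
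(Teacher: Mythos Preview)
Your proposal is correct and follows exactly the paper's own approach: the paper simply notes that the result is immediate from Theorem~\ref{thm.comp-m} (soundness and completeness of ${\bf M^\Delta}$ via the supplemented canonical model) together with Proposition~\ref{prop.i} (closure of $N^c$, and hence $(N^c)^+$, under intersections in the presence of $\Delta\text{C}$), with soundness of $\Delta\text{C}$ on quasi-filters supplied by the preceding proposition. Your write-up just unpacks these citations in more detail.
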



\weg{\begin{proposition}
On regular frames, $\nabla\psi\to(\Box\phi\lra\Delta\phi\land\Delta(\psi\to\phi))$ is valid.
\end{proposition}

\begin{proof}
Let $\M$ is a regular model and $s\in\M$. Suppose $\M,s\vDash\nabla\psi$. Then $\phi^\M\notin N(s)$ and $(\neg\psi)^\M\notin N(s)$. We need to show that $\M,s\vDash\Box\phi\lra\Delta\phi\land\Delta(\psi\to\phi)$.

First, assume that $\M,s\vDash\Box\phi$, then $\phi^\M\in N(s)$. Thus $\M,s\vDash\Delta\phi$. Since $\phi^\M\subseteq ((\neg\psi)^\M\cup\phi^\M)=(\psi\to\phi)^\M$, applying the property $(s)$ gives us $(\psi\to\phi)^\M\in N(s)$, and hence $\M,s\vDash\Delta(\psi\to\phi)$.

Conversely, assume that $\M,s\vDash\Delta\phi\land\Delta(\psi\to\phi)$, to show that $\M,s\vDash\Box\phi$, viz. $\phi^\M\in N(s)$. Since $\M,s\vDash\Delta(\psi\to\phi)$, we have $(\psi\to\phi)^\M\in N(s)$ or $(\neg(\psi\to\phi))^\M\in N(s)$. If it is the case that $(\neg(\psi\to\phi))^\M\in N(s)$, i.e. $\psi^\M\cap (\neg\phi)^\M\in N(s)$, the property $(s)$ will lead to $\psi^\M\in N(s)$, contrary to the supposition, therefore it must be the case that $(\psi\to\phi)^\M\in N(s)$. From $\M,s\vDash\Delta\phi$, it follows that $\phi^\M\in N(s)$ or $(\neg\phi)^\M\in N(s)$. It suffices to derive a contradiction from $(\neg\phi)^\M\notin N(s)$.

If $(\neg\phi)^\M\in N(s)$, since $(\neg\psi)^\M\cup\phi^\M=(\psi\to\phi)^\M\in N(s)$, applying $(i)$ gives $(\neg\phi)^\M\cap (\neg\psi)^\M\in N(s)$, then applying $(s)$ produces $(\neg\psi)^\M\in N(s)$, once again contrary to the supposition, as desired.
\end{proof}}

\begin{proposition}\label{prop.n-pres}Let $\M^c$ be a canonical model for ${\bf (EMN)^\Delta}$. Then $\M^c$ possesses the property $(n)$. As a corollary, $(\M^c)^+$ also possesses $(n)$.
\end{proposition}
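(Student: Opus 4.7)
The plan is to show directly that $S^c\in N^c(s)$ for every $s\in S^c$, and then invoke Prop.~\ref{prop.i&n} for the corollary concerning $(\M^c)^+$.

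First I would observe that $S^c=|\top|$, since $\top$ belongs to every maximal consistent set. Thus the goal reduces to exhibiting $|\top|\in N^c(s)$, which by the definition of $N^c$ in Def.~\ref{def.canonicalmodel} amounts to verifying $\Delta(\top\vee\psi)\in s$ for every $\psi\in\mathcal{L}(\Delta)$.

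Next I would note that $\vdash\top\vee\psi\lra\top$ is a propositional tautology, so by RE$\Delta$ we have $\vdash\Delta(\top\vee\psi)\lra\Delta\top$. The axiom $\Delta$N, which belongs to ${\bf (EMN)^\Delta}$, gives $\vdash\Delta\top$, and therefore $\Delta\top\in s$ by maximal consistency, whence $\Delta(\top\vee\psi)\in s$ for every $\psi$. This yields $|\top|\in N^c(s)$, i.e.\ $S^c\in N^c(s)$, establishing $(n)$ for $\M^c$.

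For the corollary, since $\M^c$ possesses $(n)$, Prop.~\ref{prop.i&n} immediately gives that $(\M^c)^+$ also possesses $(n)$. I do not foresee any real obstacle here: the argument is a direct unfolding of the canonical function together with a single application of RE$\Delta$ and the axiom $\Delta$N, and the corollary is just a citation.
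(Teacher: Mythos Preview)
Your argument is correct and is exactly the routine verification the paper has in mind; the paper states the proposition without proof, and your unfolding of $N^c$ together with RE$\Delta$, $\Delta$N, and the citation of Prop.~\ref{prop.i&n} is the intended one-line check.
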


\begin{theorem}
${\bf (EMN)^\Delta}$ is sound and strongly complete with respect to the class of frames satisfying $(s)\&(n)$.
\end{theorem}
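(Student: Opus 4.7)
The plan is to follow exactly the recipe used for Theorem~\ref{thm.comp-r}: fuse the canonical-model argument for ${\bf M^\Delta}$ (Theorem~\ref{thm.comp-m}) with the preservation of $(n)$ packaged in Proposition~\ref{prop.n-pres}.

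For \emph{soundness}, the only axiom beyond those already handled for ${\bf M^\Delta}$ is $\Delta$N. This is immediate from the semantic clause: on an $(n)$-model, $\top^{\M}=S\in N(s)$, so $\M,s\vDash\Delta\top$ at every state. The validity of $\Delta$M on $(s)$-frames is already in hand from the proposition preceding Theorem~\ref{thm.comp-m}, and validity of the remaining schemata and rules is standard.

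For \emph{strong completeness}, I would take any ${\bf (EMN)^\Delta}$-consistent $\Gamma$, extend it to a maximal consistent $s_0\in S^c$, and form the canonical model $\M^c$ for ${\bf (EMN)^\Delta}$ in the sense of Definition~\ref{def.canonicalmodel}. Its supplementation $(\M^c)^+$ is automatically closed under supersets, and by Proposition~\ref{prop.n-pres} additionally satisfies $(n)$, so the underlying frame lies in the target class. Crucially, the truth lemma for ${\bf M^\Delta}$ (Lemmas~\ref{lemma.truthlemma-mc} and~\ref{lem.canonicalmodel}) uses only RE$\Delta$, $\Delta$Equ, and $\Delta$M, all of which remain available in ${\bf (EMN)^\Delta}$; so that lemma transfers unchanged and yields $(\M^c)^+,s_0\vDash\Gamma$.

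I do not anticipate any genuine obstacle: the only non-mechanical step is the $(n)$-preservation of Proposition~\ref{prop.n-pres}, which itself is a one-line check. From $\Delta\top\in s$ (available thanks to $\Delta$N) and the tautology $\top\vee\psi\leftrightarrow\top$, RE$\Delta$ gives $\Delta(\top\vee\psi)\in s$ for every $\psi$; hence $S^c=|\top|\in N^c(s)\subseteq(N^c)^+(s)$, as required. So the whole argument is essentially a ``patch'' of the $(n)$ clause onto the existing ${\bf M^\Delta}$ completeness machinery, in direct analogy to how Theorem~\ref{thm.comp-r} was obtained from Theorem~\ref{thm.comp-m} via Proposition~\ref{prop.i}.
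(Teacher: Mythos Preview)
Your proposal is correct and follows exactly the approach implicit in the paper: soundness from the validity of $\Delta$N on $(n)$-frames together with the already-established soundness of ${\bf M^\Delta}$ on $(s)$-frames, and completeness by observing that $(\M^c)^+$ for ${\bf (EMN)^\Delta}$ is an $(s)\&(n)$-model via Proposition~\ref{prop.n-pres}, with the truth lemma inherited unchanged from Lemmas~\ref{lemma.truthlemma-mc} and~\ref{lem.canonicalmodel}. The paper itself omits the proof entirely, relying on precisely the ingredients you identify.
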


It is straightforward to obtain the following result.
\begin{theorem}
${\bf K^\Delta}$ is sound and strongly complete with respect to the class of filters.
\end{theorem}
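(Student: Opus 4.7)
The plan is to observe that ${\bf K^\Delta} = {\bf R^\Delta} + \Delta\text{N}$ and that filters are precisely the neighborhood frames satisfying $(i)$, $(s)$, and $(n)$, so the result should fall out by combining the machinery already in place for ${\bf R^\Delta}$ (Thm.~\ref{thm.comp-r}) with the $(n)$-preservation argument used for ${\bf (EMN)^\Delta}$ (Prop.~\ref{prop.n-pres}).

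For soundness, I would verify that $\Delta\text{N}$ is valid on $(n)$-frames: if $\M$ is an $(n)$-model and $s\in S$, then $S\in N(s)$; since $\top^\M=S$, we get $\top^\M\in N(s)$, whence $\M,s\vDash\Delta\top$. Validity of $\Delta$M and $\Delta$C on filters has already been established (filters are in particular $(s)$-models and quasi-filters), and TAUT, $\Delta$Equ, RE$\Delta$ are sound over every neighborhood frame.

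For strong completeness, I would form the canonical model $\M^c$ for ${\bf K^\Delta}$ as in Def.~\ref{def.canonicalmodel} and take its supplementation $(\M^c)^+$. The truth lemma transfers directly: Lemma~\ref{lemma.truthlemma-mc} applies verbatim since its proof only uses the axioms of ${\bf M^\Delta}$, which are in ${\bf K^\Delta}$, and Lemma~\ref{lem.canonicalmodel} identifies $N^c$ with $(N^c)^+$ on proof sets, so $\phi^{(\M^c)^+}=|\phi|$ for every $\phi\in\mathcal{L}(\Delta)$. It then remains to check that $(\M^c)^+$ is a filter. Property $(s)$ is immediate by definition of supplementation. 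For $(i)$, Prop.~\ref{prop.i} gives $(i)$ on $N^c$ (using $\Delta$C), and Prop.~\ref{prop.i&n} lifts it to $(N^c)^+$. For $(n)$, taking $\phi=\top$ in Def.~\ref{def.canonicalmodel} we have $|\top|=S^c$, and for every $\psi$ the formula $\top\vee\psi$ is a tautological equivalent of $\top$, so by RE$\Delta$ together with axiom $\Delta\text{N}$ we obtain $\Delta(\top\vee\psi)\in s$ for every $\psi$; hence $S^c\in N^c(s)$, and the second half of Prop.~\ref{prop.i&n} lifts $(n)$ to $(N^c)^+$.

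With these three properties in hand, $(\M^c)^+$ is a filter model in which every maximal consistent set $s$ satisfies precisely the formulas it contains. Strong completeness then follows by the standard Lindenbaum construction: any ${\bf K^\Delta}$-consistent set $\Gamma$ extends to some $s\in S^c$, and by the truth lemma $(\M^c)^+,s\vDash\Gamma$. I do not anticipate any genuine obstacle here, since every ingredient has already been isolated in the preceding sections; the only point worth stating explicitly is the verification that $S^c\in N^c(s)$, and that verification is a one-line application of $\Delta\text{N}$ and RE$\Delta$.
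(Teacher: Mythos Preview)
Your proposal is correct and matches exactly what the paper intends: it declares the result ``straightforward'' precisely because all the ingredients you list (Thm.~\ref{thm.comp-r}, Prop.~\ref{prop.i}, Prop.~\ref{prop.n-pres}, Prop.~\ref{prop.i&n}, and Lemmas~\ref{lemma.truthlemma-mc}--\ref{lem.canonicalmodel}) are already in place and combine just as you describe. Your explicit verification that $S^c\in N^c(s)$ via $\Delta$N and RE$\Delta$ is the only step not spelled out elsewhere, and it is exactly the content behind Prop.~\ref{prop.n-pres}.
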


By constructing countermodels, we can obtain the following cube, which summarizes the deductive powers of the systems in this paper. An arrow from a system $S_1$ to another $S_2$ means that $S_2$ is deductively stronger than $S_1$.
\[\xymatrix@!0{
  & {\bf R^\Delta} \ar[rr]
      &  & {\bf K^\Delta}        \\
  {\bf M^\Delta} \ar[ur]\ar[rr]
      &  & {\bf (EMN)^\Delta} \ar[ur] \\
  & {\bf (EC)^\Delta}\ar[uu]\ar[rr]
      &  & {\bf (ECN)^\Delta}   \ar[uu]            \\
   {\bf E^\Delta} \ar[rr]\ar[ur]\ar[uu]
     &  & {\bf (EN)^\Delta} \ar[ur] \ar[uu]       }\]

\section{Reflection: how does the function $\lambda$ arise?}\label{sec.reflection-lambda}

As noted, in order to show the completeness of proof systems including $\Delta$M, a crucial part is to define a suitable canonical function, i.e. $N^c$, which is inspired by the function $\lambda$ in~\cite{DBLP:journals/ndjfl/Kuhn95}. The $\lambda$ is very important for the definition of canonical relation and thus for the completeness proof in the cited paper. It is this function that helps find simple axiomatizations for the minimal contingency logic and transitive contingency logic under Kripke semantics, so to speak. Despite its importance, the author did not say any intuitive idea about $\lambda$. And this function was thought of as `ingenious' creation by some other researchers, say Humberstone~\cite[p.~118]{Humberstone:2002} and Fan, Wang and van Ditmarch~\cite[p.~101]{Fanetal:2015}. But how does the function arise? In this section, we unfold the mystery of $\lambda$, and show that it is actually equal to a related function $\lambda$ proposed in Humberstone~\cite{Humberstone95}.

To show completeness of minimal non-contingency logic under Kripke semantics, Humberstone~\cite[p.~219]{Humberstone95} defined the canonical relation $R^c$ as $xR^cy$ iff $\lambda(x)\subseteq y$, where, denoted by H's $\lambda$,
$$\lambda(x)=\{\phi\mid \Delta \phi\in x\text{ and }\forall \psi\text{ such that }\vdash \phi\to \psi,\Delta \psi\in x\}.\footnote{In the definition of $\lambda$, Humberstone used $A$ and $B$ rather than $\phi$ and $\psi$, respectively. To maintain the consistency of notation in this paper, we here use $\phi$ and $\psi$ instead.}$$
The reason for defining the function $\lambda$ in such a way, is that the author would like to `simulate' the canonical relation of the minimal modal logic, which is defined via $xRy$ iff $\lambda(x)\subseteq y$, where $\lambda(x)=\{\phi\mid \Box \phi\in x\}$. This can be seen from several passages:

\medskip

{\em The intuitive idea is that for $x\in W$, $\lambda(x)$ is the set of formulas which are necessary at $x$.}

$\cdots$

{\em The idea of the entry condition on $A$, that only such $A$ (with $\Delta A\in x$) should be labeled as Necessary if all their consequences are non-contingent, is that $\cdots$, those non-contingencies which qualify as such because they, rather than their negations, are necessary and have only non-contingent consequences, since those consequences are themselves necessary.}~\cite[p.~219]{Humberstone95}

\medskip

Then the function $\lambda$ was simplified, and accordingly, the completeness proof was simplified in~\cite{DBLP:journals/ndjfl/Kuhn95}. There, $\lambda(x)$, denoted by K's $\lambda$, is defined as:
$$\lambda(x)=\{\phi\mid\forall \psi, \Delta(\phi\vee \psi)\in x\}.$$
In the sequel, we will demonstrate that, in fact, K's $\lambda$ is equal to H's $\lambda$.

To begin with, notice that $\vdash \phi\to \phi$, thus the part following `and' in the H's $\lambda$ definition entails $\Delta \phi\in x$. Therefore, the H's $\lambda(x)$ is equal to a simplified version:
$$\lambda(x)=\{\phi\mid \forall \psi\text{ such that }\vdash \phi\to \psi,\Delta \psi\in x\}.$$
Then it is sufficient to show that the simplified $\lambda$ is further equal to K's $\lambda$, even in the setting of arbitrary neighborhood contingency logics (as opposed to Kripke contingency logics).
\begin{proposition} Let $x$ be a maximal consistent set. Given the rule RE$\Delta$, the following statements are equivalent.\footnote{RE$\Delta$ is just ($\Delta$Cong) in~\cite{Humberstone95}.}

(1) For every $\psi\text{ such that }\vdash \phi\to \psi$, $\Delta \psi\in x$.

(2) For every $\psi$, $\Delta(\phi\vee \psi)\in x$.
\end{proposition}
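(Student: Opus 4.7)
The plan is to prove both directions by direct syntactic manipulation, exploiting only propositional tautologies together with the rule RE$\Delta$; the maximal consistency of $x$ will then let us transfer provable equivalences into membership equivalences.

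For the direction (1) $\Rightarrow$ (2), I would fix an arbitrary $\psi$ and observe that $\vdash \phi \to (\phi \vee \psi)$ is a propositional tautology. Taking $\phi \vee \psi$ as the ``$\psi$'' in the statement of (1) immediately yields $\Delta(\phi \vee \psi) \in x$, which is exactly (2).

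For the direction (2) $\Rightarrow$ (1), I would fix $\psi$ with $\vdash \phi \to \psi$. Then $\vdash (\phi \vee \psi) \lra \psi$ is a propositional consequence. Applying RE$\Delta$ gives $\vdash \Delta(\phi \vee \psi) \lra \Delta\psi$, and since $x$ is maximal consistent it contains all theorems and is closed under modus ponens, so $\Delta(\phi \vee \psi) \in x$ iff $\Delta\psi \in x$. From (2) we know $\Delta(\phi \vee \psi) \in x$, hence $\Delta\psi \in x$, as required.

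Neither direction presents any genuine obstacle; the proof is essentially a two-line calculation on each side. The only subtlety worth flagging is the role of maximal consistency in the second direction: without it, a provable biconditional need not lift to membership equivalence, and the argument would fail. Since the proposition is stated explicitly for maximal consistent sets and cites RE$\Delta$ as available, both ingredients are in hand and the proof goes through immediately.
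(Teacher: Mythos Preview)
Your proof is correct and follows essentially the same route as the paper's: both directions use the tautology $\vdash \phi \to \phi \vee \psi$ for $(1)\Rightarrow(2)$, and the equivalence $\vdash (\phi \vee \psi) \leftrightarrow \psi$ together with RE$\Delta$ for $(2)\Rightarrow(1)$. Your remark on the role of maximal consistency is a helpful clarification the paper leaves implicit, but otherwise the arguments coincide.
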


\begin{proof}
$(1)\Longrightarrow(2)$: suppose (1) holds. Since $\vdash\phi\to\phi\vee\psi$, then it is immediate by (1) that $\Delta(\phi\vee\psi)\in x$, namely (2).

$(2)\Longrightarrow(1)$: suppose (2) holds, to show (1). For this, assume that $\vdash\phi\to\psi$, then $\vdash\phi\vee\psi\lra\psi$, by RE$\Delta$, $\vdash\Delta(\phi\vee\psi)\lra\Delta\psi$. By (2), we obtain that $\Delta\psi\in x$, as desired.
\end{proof}

\section{Concluding Discussions}\label{sec.concl}

In this note, by defining suitable neighborhood canonical functions, we presented a sequence of contingency logics under neighborhood semantics. In particular, inspired by Kuhn's function $\lambda$ in~\cite{DBLP:journals/ndjfl/Kuhn95}, we defined a desired canonical neighborhood function, and then axiomatized monotone contingency logic and regular contingency logic and other logics including the axiom $\Delta$M, thereby answering two open questions raised in~\cite{Bakhtiarietal:2017}. We then reflected on the function $\lambda$, and showed that it is actually equal to Humberstone's function $\lambda$ in~\cite{Humberstone95}, even in the setting of arbitrary neighborhood contingency logics.

Moreover, as we observe, in ${\bf M^\Delta}$, $\Delta$M can be replaced by $\Delta\phi\to\Delta(\phi\to\psi)\vee \Delta(\neg\phi\to\chi)$, and in ${\bf R^\Delta}$, $\Delta$C can be replaced by $\Delta(\psi\to\phi)\land\Delta(\neg\psi\to\phi)\to\Delta\phi$.\footnote{The hard part is the direction from $\Delta(\psi\to\phi)\land\Delta(\neg\psi\to\phi)\to\Delta\phi$ to $\Delta$C. The proof details for this, we refer to~\cite[Prop.~50]{DBLP:journals/corr/FanWD13}, where knowing whether operator $\textit{Kw}$ is the epistemic reading of $\Delta$.} Thus we can also adopt these two alternative formulas to axiomatize monotone contingency logic and regular contingency logic. Therefore, it was {\em wrong} to claim that ``one cannot fill these gaps with the axioms $\Delta\phi\to\Delta(\phi\to\psi)\vee \Delta(\neg\phi\to\chi)$ and $\Delta(\psi\to\phi)\land\Delta(\neg\psi\to\phi)\to\Delta\phi$'' on~\cite[p.~62]{Bakhtiarietal:2017}.

Recall that an `almost definability' schema, $\nabla\chi\to(\Box\phi\lra(\Delta\phi\land\Delta(\chi\to\phi)))$, is proposed in~\cite{Fanetal:2014}, and shown in~\cite{Fanetal:2015} to be applied to axiomatize contingency logic over much more Kripke frame classes than Kuhn's function $\lambda$ and other variations. Therefore, it may be natural to ask if the schema can also work in the neighborhood setting. The canonical neighborhood function inspired by the schema seems to be
$$N(s)=\{|\phi|\mid \Delta\phi\land\Delta(\psi\to\phi)\in s\text{ for some }\nabla\psi\in s\}.$$
Unfortunately the answer seems to be negative. The reason can be explained as follows. Although $N^c$ in Def.~\ref{def.canonicalmodel} is almost monotonic in the sense that if $|\phi|\in N^c(s)$ and $|\phi|\subseteq |\psi|$, then $|\psi|\in N^c(s)$, as can be easily seen from the proof of Lemma~\ref{lem.canonicalmodel}, in contrast, as one may easily verify, $N$ is not almost monotonic in the above sense, i.e., it fails that if $|\phi|\in N(s)$ and $|\phi|\subseteq |\psi|$, then $|\psi|\in N(s)$.
This can also explain why $N^c$ works well for monotone and regular contingency logics and other logics including the axiom $\Delta$M. Despite this fact, this $N^c$ does not apply to systems excluding the axiom $\Delta$M, since we need this axiom to ensure the truth lemma (Lemma~\ref{lemma.truthlemma-mc}). It is also worth noting that this $N^c$ is smaller than that in the case of classical contingency logic (Def.~\ref{def.cm-no-m}), thus we cannot address all neighborhood contingency logics in a unified way.\footnote{In contrast, the canonical neighborhood function used in the completeness proof of classical modal logic is the smallest neighborhood function among canonical neighborhood functions used in the completeness proofs of all neighborhood modal logics. Cf. e.g.~\cite{Chellas:1980}.} This indicates that the completeness proofs of these logics are nontrivial. Besides, $N^c$ seems not workable for proper extensions of ${\bf K^\Delta}$, which we leave for future work.

\section{Acknowledgements}

This research is supported by the youth project 17CZX053 of National Social Science Fundation of China. The author would like to thank Lloyd Humberstone for careful reading of the manuscript and insightful comments.

\bibliographystyle{plain}
\bibliography{biblio2017,biblio2016}

\end{document}